\journal{Journal of \LaTeX\ Templates}
\newcommand{\R}{{\Bbb R}}
\newcommand{\N}{{\Bbb N}}
\newcommand{\C}{{\Bbb C}}
\newtheorem{thm}{Theorem}
\newtheorem{lemma}[thm]{Lemma}
\newtheorem{corollary}[thm]{Corollary}
\newproof{proof}{Proof}
\begin{document}

\begin{frontmatter}


\title{A simple approach to the wave uniqueness problem}

\author[a]{Abraham Solar}
\author[b]{and Sergei Trofimchuk}
\address[a]{Instituto de F\'isica, Pontificia Universidad Cat\'olica de Chile, Casilla 306, Santiago, Chile 
\\ {\rm E-mail: asolar@fis.uc.cl}}
\address[b]{Instituto de Matem\'atica y F\'isica, Universidad de Talca, Casilla 747,
Talca, Chile \\ {\rm E-mail: trofimch@inst-mat.utalca.cl}}

\begin{abstract}  
We propose a new approach for proving uniqueness of  semi-wavefronts in generally non-monotone  monostable  reaction-diffusion equations with distributed delay.  This allows to solve an open problem concerning the uniqueness of non-monotone (hence, slowly oscillating) semi-wavefronts to the KPP-Fisher equation with delay. Similarly, a broad family of the Mackey-Glass type diffusive equations is shown to possess a unique (up to translation)  semi-wavefront 
for each admissible speed. 
\end{abstract}

\begin{keyword}monostable equation\sep
 non-monotone reaction  \sep uniqueness \sep KPP-Fisher delayed equation  \sep Mackey-Glass type diffusive equation
\MSC[2010] 34K12\sep 35K57\sep 92D25
\end{keyword}

\end{frontmatter}


\section{Introduction and main results}
The uniqueness  of traveling waves for the monostable delayed or non-local reaction-diffusion equations is an important and `largely open' \cite{chen}  question of the theory of  partial functional differential equations.  In consequence, different strategies have  been elaborated so far to tackle the uniqueness problem, e.g. see  
\cite{AGT,BS,BNPR,CC,chen,co,CDM,dk,fzJDE,FZ,GT,HTa,mazou,MeiI,tz,Sch,TPT,WLR,XX}.   Broadly speaking, the cited works show that the wave uniqueness can be established when either the  evolution equation or  the waves are monotone,  or when the Lipschitz constant of nonlinear reaction term is dominated by its derivative  at the unstable  equilibrium (the Diekmann-Kaper  condition).  On the other hand, recent studies \cite{CDM,HK,NPTT} reveal that the uniqueness property  can fail to hold even for monotone waves of some non-local monostable equations.  To get a clearer picture of the situation, consider the following KPP-Fisher delayed equation 
(see \cite{BS, ADN,GT,LMS,HK,HTa,HTw,KO,wz} for more detail and  references concerning this model): 
\begin{equation}\label{KPP} \hspace{0mm}
\frac{\partial u(t,x)}{\partial t} = \frac{\partial^2 u(t,x)}{\partial x^2}  + u(t,x)(1-u(t-h,x)). 
\end{equation}
This partial functional differential equation does not meet quasi-monotonicity assumptions neither in the sense of Wu-Zou \cite{wz} nor in the sense of Martin-Smith \cite{MS}. Moreover, its reaction term  does not satisfy the Diekmann-Kaper  dominance condition at $0$.  In addition,  if $h \geq 0.57$ then all non-constant positive wave solutions to (\ref{KPP}) are slowly oscillating in the space variable, see \cite{ADN,HTa}. In this situation,  neither the comparison techniques nor the  Berestycki-Nirenberg sliding solutions method, nor the  Diekmann-Kaper approach can be used to prove the uniqueness of all traveling waves to (\ref{KPP}) (the property conjectured in \cite{HK}).  Thus only partial waves' uniqueness results for the Hutchinson diffusive equation  (\ref{KPP}) were available so far. For instance,   
for some speeds $c\geq 2$ and delays $h\leq 0.57$, this equation has monotone wavefronts $u(t,x) = \phi(x+ct)$,  their uniqueness (up to translation)
was proved in \cite{GT,HTa}. Noteworthily, in the recent e-print \cite{BS}, this result was complemented and the uniqueness of all  fast waves, $c \geq 2\sqrt{2}$,  for (\ref{KPP}) (including non-monotone waves) was deduced from their global stability on semi-infinite intervals.

A similar situation is also observed for another popular delayed  model, the Mackey-Glass type  diffusive equation \cite{AGT,BY,fzJDE,mazou,MeiI,tz,TPT,TPTJDE,WLR,XX,yz}
\begin{equation}\label{MG} \hspace{0mm}
\frac{\partial u(t,x)}{\partial t} = \frac{\partial^2 u(t,x)}{\partial x^2}   -u(t,x) + g(u(t-h,x)), \ u \geq 0. 
\end{equation}
Here $g:\R_+\to \R_+$ is unimodal (i.e. hump-shaped) bounded $C^1$-smooth function possessing exactly two non-negative fixed points $u_1\equiv 0<u_2\equiv \kappa$. For (\ref{MG}),   the uniqueness question is completely (i.e. for all $h\geq 0$ and for all admissible  speeds)  answered only if either $g$ is monotone on $[0,\kappa]$,  or $|g'(s)| \leq g'(0), \ s \in \R,$ (this amounts to the  Diekmann-Kaper condition at the equilibrium $0$ for (\ref{MG})).  This is  for instance the case of the Nicholson's diffusive equation ($g(u)=pue^{-u}$); however, other population  models (like R. May's sei whale model \cite{BY}, where $g(u) = \max\{pu(1-u^z/k^z),0\}$, for some $z >1$)  do not fit into the frameworks of the above mentioned theories. 
 
In the present work,  we propose a novel approach for proving uniqueness of  semi-wavefronts in a general non-monotone monostable  reaction-diffusion equations with distributed delay of the following form 
\begin{equation}\label{17a} \hspace{0mm}
\frac{\partial u(t,x)}{\partial t} = \frac{\partial^2 u(t,x)}{\partial x^2}  + f(u_t(\cdot,x)), \ u
\geq 0, 
\end{equation}
where $f: C[-h,0] \to \R$ is a continuous functional, $C:= C[-h,0]$ is the Banach space of all continuous scalar functions defined on $[-h,0]$ and 
$u_t(s,x) = u(t+s,x),\ s \in [-h,0]$, belongs to $C$ for every fixed $x\in \R$. 
 This approach is based on a relatively simple idea which  we believe can also be  useful for other diffusive systems.  In particular, our 
method allows to provide a complete solution to the uniqueness problem for the KPP-Fisher equation with delay or for the May diffusive  baleen whale model. 

\vspace{0mm}

In the sequel,   we always assume the following  natural and easily verifiable conditions: 

\vspace{1mm}

\noindent {\bf (M)} Identifying each constant function $x\in C$ with a real number $x \in \R$, 
set $f^*(x) = f(x)$ and suppose that $f^*(x)$ satisfies the standard monostability requirements
(i.e. $f^*$ has only two zeros on $\R_+$, $x_1=0$ and $x_2=\kappa$; moreover, $f^*(x) >0$ on $(0,\kappa)$). 
 
\vspace{1mm}

\noindent {\bf (S)} Functional $f:C \to \R$ is continuous, transforms bounded sets into bounded sets,  and it is differentiable at $0$. Moreover, for some positive $\alpha$, $\delta$, $K$, and the max-norm $|\phi|_C$, 
\begin{equation}\label{S}
|f(\psi) -f(\phi) - f'(0)(\psi-\phi)| \leq K|\psi-\phi|_C(|\phi|_C^{\alpha}+|\psi|_C^{\alpha}),  \ \mbox{for all} \ |\phi|_C<\delta,  |\psi|_C<\delta. 
\end{equation}

\noindent Using the Jordan decomposition theorem, we can write $f'(0)\phi$ as 
$$
f'(0)\phi = \int_{-h}^0\phi(s)d\mu_+(s) - \int_{-h}^0\phi(s)d\mu_-(s), 
$$
where $\mu_\pm$ are non-decreasing functions on $[-h,0]$. 

\vspace{1mm}

\noindent {\bf (J)}  We will assume that $\int_{-h}^0\phi(s)d\mu_-(s) = q \phi(0)$ for some $q\geq 0$.

\vspace{1mm}

\noindent By {\bf (J)}, we obtain that   
$$f'(0)\phi = -q\phi(0)+  \int_{-h}^0\phi(s)d\mu_+(s). $$

\noindent {\bf (ND)}   Set $p:=  \int_{-h}^0d\mu_+(s)$, it follows from {\bf (M)}  that $p \geq q$. In addition, we will assume the 
following {non-degeneracy}  condition: $p >q$.

\vspace{1mm}

\noindent {\bf (UB)} For each $\phi, \psi \in C$ satisfying $0 < \phi(s)\leq \psi (s), \ s \in [-h,0],$ it holds
that \footnote{
It is instructive to note that, by its essence,  {\it `quasi-monotonicity'} condition {\bf (UB)} is completely  different from the Wu-Zou quasi-monotonicity condition (A2) introduced in  \cite{wz}.}
$$f(\psi) - f(\phi) \leq  f'(0)(\psi-\phi).$$ 
\noindent {\bf (LB)}  Moreover,  for each $\epsilon >0$ there exists $\delta >0$ such that for all $\phi \in C$ satisfying $0<\phi(s)\leq \delta,$ $s \in [-h,0]$, it holds that 
$$q\phi(0)+ f(\phi) \geq (1-\epsilon) \int_{-h}^0\phi(s)d\mu_+(s). $$

\vspace{1mm}

\noindent Our main result is the following theorem.  

\begin{thm} \label{main} Let $u_1(x,t) = \phi(x+ct)$, $u_2(x,t) = \psi(x+ct)$ be two positive semi-wavefronts (i.e. $\phi(-\infty)=\psi(-\infty)=0$ and 
$\phi(t), \psi(t)$ are positive and bounded on $\R$) of  equation (\ref{17a}). If assumptions  {\bf (M)},  {\bf (S)}, {\bf (J)},  {\bf (ND)}, {\bf (LB)} and  {\bf (UB)} are satisfied then 
$\phi(t+t')\equiv \psi(t),$ $t \in \R$, for some finite $t'\in \R$.  
\end{thm}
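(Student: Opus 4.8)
The plan is to convert the profile equation into a positivity-preserving integral equation, pin down the common leading-edge behaviour of the two profiles, normalise it away by a translation, and then rule out any residual difference by a comparison argument built on {\bf (UB)} and {\bf (LB)}.

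First I would pass to the travelling coordinate $z=x+ct$, so that a profile $\phi$ solves $\phi''(z)-c\phi'(z)+f(\phi(z+c\,\cdot))=0$, where $f$ acts on the segment $s\mapsto\phi(z+cs)$, $s\in[-h,0]$. Using {\bf (J)} I would move the concentrated part of the linearisation to the left, writing the equation as $\mathcal L\phi:=\phi''-c\phi'-q\phi=-F(\phi(z+c\,\cdot))$ with $F(\eta):=f(\eta)+q\eta(0)$, so that $F'(0)\eta=\int_{-h}^0\eta(s)\,d\mu_+(s)\ge0$ is a positive functional. Since $q\ge0$, the characteristic polynomial $r^2-cr-q$ has one nonpositive and one positive root, so $\mathcal L$ possesses a strictly positive bounded Green kernel $G>0$ and the profile satisfies the fixed-point identity $\phi(z)=\int_{\R}G(z-y)F(\phi(y+c\,\cdot))\,dy$. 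Two elementary consequences of {\bf (UB)} are recorded here: for ordered positive segments $\phi\le\psi$ one has $F(\psi)-F(\phi)\le F'(0)(\psi-\phi)$, and letting the lower argument tend to $0$ gives the global bound $F(\eta)\le F'(0)\eta$; {\bf (LB)} supplies the matching reverse estimate $F(\eta)\ge(1-\epsilon)F'(0)\eta$ for small $\eta$, so that $F$ is squeezed against its linearisation near the unstable state.

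Next I would analyse the leading edge $z\to-\infty$. The characteristic function is $\chi(\lambda)=\lambda^2-c\lambda+f'(0)(e^{\lambda c\,\cdot})=\lambda^2-c\lambda-q+\int_{-h}^0e^{\lambda cs}\,d\mu_+(s)$, and {\bf (ND)} yields $\chi(0)=p-q>0$; for an admissible speed $\chi$ has a least positive root $\lambda_1$. Because the profiles are positive and, by {\bf (S)}, {\bf (LB)} and {\bf (UB)}, the equation is asymptotically linear at $0$, a Tauberian (Ikehara-type) argument applied to the Laplace transforms of $\phi$ and $\psi$ should give $\phi(z)=(A_\phi+o(1))e^{\lambda_1 z}$ and $\psi(z)=(A_\psi+o(1))e^{\lambda_1 z}$ as $z\to-\infty$ with $A_\phi,A_\psi>0$; positivity is what forces a real principal exponent and excludes a faster, oscillatory leading term. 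Replacing $\phi$ by a translate $\phi(\cdot+t_0)$ multiplies $A_\phi$ by $e^{\lambda_1 t_0}$, so I may normalise $A_\phi=A_\psi$, that is $\psi(z)/\phi(z)\to1$ as $z\to-\infty$.

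Finally I would compare the normalised profiles, and I expect this step to be the main obstacle. Let $t^{*}=\inf\{t\ge0:\phi(z+t)\ge\psi(z)\ \text{for all }z\}$, finite by the leading-edge normalisation together with boundedness and positivity of the profiles near $+\infty$; writing $\Phi=\phi(\cdot+t^{*})$ and $w=\Phi-\psi\ge0$, condition {\bf (UB)} turns the difference into a subsolution of the linearisation, $\mathcal Lw\ge-F'(0)(w(\cdot+c\,\cdot))$, equivalently $w\le\mathcal A w$, where $\mathcal A$ is the positive integral operator with kernel $G$ and measure $\mu_+$, and $\mathcal A e^{\lambda\cdot}=\Lambda(\lambda)e^{\lambda\cdot}$ with $\Lambda(\lambda)=1$ exactly when $\chi(\lambda)=0$, so $e^{\lambda_1 z}$ sits at the threshold of $\mathcal A$. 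The difficulty is precisely the non-monotonicity of $f$: {\bf (UB)} is only one-sided, so it produces a subsolution inequality, and a contact $w(z_0)=0$ is a minimum, where such an inequality carries no information; the alternative scaling comparison with $\sigma^{*}\phi$ fails for the same reason, since {\bf (UB)} does not deliver $F(\sigma^{*}\phi)\le\sigma^{*}F(\phi)$. I would overcome this by invoking {\bf (LB)} and {\bf (S)} at the leading edge, where $F$ must agree with $F'(0)$ up to $o(1)$: there the subsolution inequality becomes an asymptotic equality, $w$ decays no faster than the threshold mode $e^{\lambda_1 z}$, and the strict positivity of $G$ and $\mu_+$ should let a strong-maximum-principle argument for $\mathcal A$ spread the contact from $z_0$, contradicting the minimality of $t^{*}$ unless $w\equiv0$; the symmetric inequality then forces $\phi(\cdot+t')\equiv\psi$. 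The residual technical point, the critical speed $\lambda_1=\lambda_2$ where a resonant term $z e^{\lambda_1 z}$ enters the asymptotics, would be absorbed into the normalisation before running the comparison.
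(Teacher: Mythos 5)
Your proposal matches the paper's first two stages in spirit (positivity-preserving integral reformulation, exponential leading-edge asymptotics at $-\infty$), but the core uniqueness step — the sliding comparison — contains a genuine gap that you yourself half-identify and then do not close. First, the infimum $t^{*}=\inf\{t\ge 0:\phi(\cdot+t)\ge\psi\}$ need not exist: semi-wavefronts here are in general non-monotone and need not converge at $+\infty$ (the slowly oscillating KPP-Fisher profiles are the paper's motivating case), and translation does not rescale oscillation amplitudes, so no translate of $\phi$ need dominate $\psi$ globally. Second, even granting $t^{*}$, the one-sided condition \textbf{(UB)} yields only the subsolution inequality $w\le\mathcal A w$ for $w=\Phi-\psi\ge 0$; at a contact point $w(z_0)=0$ this reads $0\le(\mathcal A w)(z_0)$ and is vacuous, since the strong maximum principle propagates zeros only for supersolutions $w\ge\mathcal A w$, which would require the reverse of \textbf{(UB)}. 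Your proposed repair — that \textbf{(LB)} makes the inequality an ``asymptotic equality at the leading edge'' — is not an argument, because the contact point need not lie at the leading edge. A minor additional flaw: your operator $\mathcal L=\partial^2-c\partial-q$ has no positive integrable Green kernel when $q=0$ (exactly the KPP-Fisher case); the paper shifts by $1+q$ on both sides to invert.

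The missing idea, which is the paper's actual mechanism, is to test \textbf{(UB)} at a \emph{maximum} rather than a minimum by an exponential weight inside the spectral gap: after normalizing both profiles so that $\phi,\psi = e^{\lambda_1 t}+O(e^{(\lambda_1+\epsilon)t})$ (this needs exponentially small error terms, obtained via Mallet-Paret's Fredholm asymptotics together with a lemma excluding super-exponentially small solutions — your Ikehara-type $o(1)$ asymptotics are too weak for this), one sets $y=(\psi-\phi)e^{-\lambda t}$ with $\lambda\in(\lambda_1,\lambda_2)$, where $\chi(\lambda,c)<0$. Then $y\to 0$ at both ends of $\R$ (at $+\infty$ the weight kills boundedness), so if $y\not\equiv 0$ it attains a positive absolute maximum at some $\zeta$; since $y>0$ on $(-\infty,\zeta]$ the segments are ordered there, \textbf{(UB)} gives the remainder the sign $R(\zeta,\tilde y_\zeta)\le 0$, and the profile equation yields $0\le\chi(\lambda,c)\,y(\zeta)<0$, a contradiction. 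The residual case where $\psi-\phi$ decays at rate $\ge\lambda_2$ is handled by the no-small-solutions lemma, and the critical speed $c=c_*$ (where $\lambda_1=\lambda_2$ and the resonant $t e^{\lambda_1 t}$ appears) is \emph{not} absorbed into the normalisation as you suggest: it needs a separate argument, rewriting the equation via $\chi(\lambda,c)=\chi'(\lambda,c)=0$, integrating over $(-\infty,\theta]$ with Fubini, and exploiting that all resulting terms are one-signed. In short: your leading-edge normalisation is sound in outline, but the comparison framework must be replaced by the weighted-maximum argument, which is precisely engineered so that the one-sided quasi-monotonicity \textbf{(UB)} carries information at the point where it is evaluated.
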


\begin{corollary} For each $c\geq 2$ and for each $h\geq 0$,  the KPP-Fisher delayed equation (\ref{KPP}) has a unique (up to translation) semi-wavefront.  
\end{corollary}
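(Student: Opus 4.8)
The plan is to recognize the Hutchinson equation (\ref{KPP}) as a concrete instance of the abstract model (\ref{17a}) and then simply run down the checklist of hypotheses (M), (S), (J), (ND), (LB), (UB) of Theorem \ref{main}; once all of them are confirmed the uniqueness is immediate. To begin, I would identify the reaction functional as $f(\phi) = \phi(0)(1 - \phi(-h))$ for $\phi \in C[-h,0]$, where the evaluations $\phi(0)$ and $\phi(-h)$ encode the local term $u(t,x)$ and the delayed term $u(t-h,x)$, respectively. This $f$ is clearly continuous and maps bounded sets to bounded sets, since $|f(\phi)| \le |\phi|_C(1 + |\phi|_C)$.

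The central computation is the Fr\'echet derivative at the origin. Writing $f(\phi) = \phi(0) - \phi(0)\phi(-h)$ and noting that the product $\phi(0)\phi(-h)$ is a bounded quadratic form, one gets $f'(0)\phi = \phi(0)$. In the language of the paper the Jordan decomposition is then trivial: $\mu_+$ is the unit Dirac mass at $s = 0$, while $\mu_- \equiv 0$, so that $q = 0$ and $p = \int_{-h}^0 d\mu_+(s) = 1$. This disposes of (J) at once, because with $\mu_- \equiv 0$ the identity $\int_{-h}^0 \phi(s)d\mu_-(s) = q\phi(0)$ reduces to $0 = 0$, and it also disposes of (ND), since $p = 1 > 0 = q$. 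For (M) I would observe that the restriction of $f$ to constants is $f^*(x) = x(1-x)$, which vanishes exactly at $x_1 = 0$ and $x_2 = \kappa = 1$ and is positive on $(0,1)$.

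The three remaining conditions are direct algebraic identities that exploit the explicit product structure. For (S), the remainder after subtracting the linear part is $f(\psi) - f(\phi) - f'(0)(\psi - \phi) = \phi(0)\phi(-h) - \psi(0)\psi(-h)$; inserting the intermediate term $\phi(0)\psi(-h)$ and applying the triangle inequality bounds this by $|\psi - \phi|_C(|\phi|_C + |\psi|_C)$, so (S) holds with $\alpha = 1$ and $K = 1$. For (UB), when $0 < \phi(s) \le \psi(s)$ the same cancellation reduces the required inequality $f(\psi) - f(\phi) \le \psi(0) - \phi(0)$ to $\phi(0)\phi(-h) \le \psi(0)\psi(-h)$, which is immediate from the monotonicity of the product in positive factors. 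For (LB), with $q = 0$ and $\int_{-h}^0 \phi(s)d\mu_+(s) = \phi(0)$, the inequality becomes $\phi(0)(1 - \phi(-h)) \ge (1-\epsilon)\phi(0)$, that is $\phi(-h) \le \epsilon$, so it suffices to take $\delta = \epsilon$.

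With every hypothesis confirmed, Theorem \ref{main} applies to any two positive semi-wavefronts of (\ref{KPP}) sharing a common speed $c$, yielding uniqueness up to translation; recalling that the admissible speeds are exactly $c \ge 2$ for every $h \ge 0$ (the linearization $\phi'' - c\phi' + \phi = 0$ forcing $c^2 \ge 4$), for which semi-wavefronts are known to exist, the corollary follows. I do not expect a genuine obstacle: the whole argument is a verification, and the only point deserving a moment's care is the observation that $f'(0)\phi = \phi(0)$ forces $q = 0$, which is precisely what makes (J), (ND) and (LB) effortless to satisfy.
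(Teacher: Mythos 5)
Your proposal is correct and takes essentially the same approach as the paper: the same choice $f(\phi)=\phi(0)(1-\phi(-h))$ with $f'(0)\phi=\phi(0)$, hence $p=1>q=0$, the same algebraic decomposition verifying \textbf{(S)} and \textbf{(UB)}, and the appeal to known existence of semi-wavefronts for all $c\geq 2$, $h\geq 0$ (the paper cites Hasik--Trofimchuk), followed by Theorem \ref{main}. Your explicit verifications of \textbf{(M)}, \textbf{(J)}, \textbf{(ND)} and \textbf{(LB)} (with $\delta=\epsilon$) are accurate details that the paper compresses into ``clearly satisfied.''
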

\begin{proof} By \cite{HTa}, for each $c\geq 2$ and $h \geq 0$, equation (\ref{KPP}) has at least one semi-wavefront. For  $\phi  \in C$, set 
$f(\phi)= \phi(0)(1-\phi(-h))$, then $f'(0)\phi=\phi(0)$, $ p =1>q =0$, and
$$
f(\psi) - f(\phi)  = \psi(0)-\phi(0) +\phi(0)\phi(-h)-\psi(0)\psi(-h) =  f'(0)(\psi-\phi) +
$$
$$
\phi(0)(\phi(-h)-\psi(-h)) + \psi(-h)(\phi(0)-\psi(0))\leq  f'(0)(\psi-\phi) \ \mbox{once} \ 0< \phi(s) \leq  \psi(s), s \in [-h,0];
$$
$$
|f(\psi) -f(\phi)-f'(0)(\psi-\phi)|  \leq |\psi-\phi|_C(|\phi|_C+ |\psi|_C)  \ \mbox{for all } \   \phi,\psi  \in C.  
$$
Thus assumptions  {\bf (M)},  {\bf (S)}, {\bf (J)}, {\bf (UB)}  and  {\bf (LB)}, {\bf (ND)} are clearly satisfied.  Therefore the statement of the corollary follows from Theorem \ref{main}. \hfill $\square$
\end{proof}
To present a similar result for the Mackey-Glass type diffusive equation (\ref{MG}), we need the following auxiliary 
assertion. 
\begin{lemma} \label{STA} Assume {\bf (J)},  {\bf (ND)}   and set $\chi(z,c) = z^2 -cz + f'(0)e^{\cdot cz}$. Then there exists $c_*>0$ such that $\chi(z,c)$ has exactly two positive zeros (counting multiplicity) $\lambda_1(c)\leq \lambda_2(c)$ if and only if $c \geq c_*$. These zeros are simple  if $c > c_*$, while  $\lambda_1(c_*) = \lambda_2(c_*)$ is a double zero.  Next, for $c \geq c_*$ every different zero $\lambda_j(c)$ of  $\chi(z,c)$  satisfies $\Re \lambda_j(c) < \lambda_1(c)$. 
\end{lemma}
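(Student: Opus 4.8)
The plan is to push everything onto the real axis, where assumptions \textbf{(J)} and \textbf{(ND)} make $\chi$ strictly convex. Writing $f'(0)e^{\cdot cz}$ out via \textbf{(J)} gives $\chi(z,c) = z^2 - cz - q + H(cz)$ with $H(w):=\int_{-h}^0 e^{ws}\,d\mu_+(s)$ and $H(0)=p$. For real $\sigma$ one has $\partial_\sigma^2\chi(\sigma,c)=2+c^2\int_{-h}^0 s^2e^{c\sigma s}\,d\mu_+(s)\ge 2>0$, so $\sigma\mapsto\chi(\sigma,c)$ is strictly convex and coercive. Two sign observations fix the picture: $\chi(0,c)=p-q>0$ by \textbf{(ND)}, while for $\sigma\le 0$ both $\sigma^2-c\sigma\ge 0$ and $H(c\sigma)\ge p>q$, so $\chi>0$ on $(-\infty,0]$. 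Hence every real zero is positive and, by strict convexity, $\chi(\cdot,c)$ has none, one (double), or exactly two zeros, each simple, according as its minimum value $m(c):=\min_\sigma\chi(\sigma,c)$ is positive, zero, or negative.

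To pin down the threshold I would track $m(c)$. Writing $\sigma_c$ for the (positive) minimiser and using $H'(w)=\int_{-h}^0 se^{ws}\,d\mu_+(s)\le 0$, the envelope identity gives $m'(c)=\partial_c\chi(\sigma_c,c)=\sigma_c\big(H'(c\sigma_c)-1\big)<0$, so $m$ is strictly decreasing; since $m(c)\to p-q>0$ as $c\to 0^+$ and $m(c)\to-\infty$ as $c\to\infty$, there is a unique $c_*>0$ with $m(c_*)=0$. This yields the ``exactly two positive zeros (counting multiplicity) iff $c\ge c_*$'' statement, the simplicity for $c>c_*$ (as $\partial_\sigma\chi\ne 0$ at each zero) and the double zero $\lambda_1(c_*)=\lambda_2(c_*)=\sigma_{c_*}$. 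The same data can be read off from the substitution $w=cz$, which turns $\chi(z,c)=0$ into $w^2=c^2P(w)$ with $P(w)=w+q-H(w)$ increasing; on $\{P>0\}$ the ratio $\Phi(w)=w^2/P(w)$ is unimodal and blows up at both ends, so $c_*=\sqrt{\min\Phi}$.

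The heart of the lemma, and the step I expect to be the main obstacle, is the dominance claim: every zero other than $\lambda_1(c),\lambda_2(c)$ has real part strictly below $\lambda_1(c)$. Take a zero $\lambda=\sigma+i\tau$ with $\tau\ne 0$. The real part of $\chi(\lambda,c)=0$ rearranges to $\chi(\sigma,c)=\tau^2+\int_{-h}^0 e^{c\sigma s}\big(1-\cos(c\tau s)\big)\,d\mu_+(s)>0$, which already forces $\sigma\notin[\lambda_1(c),\lambda_2(c)]$, i.e. $\sigma<\lambda_1(c)$ or $\sigma>\lambda_2(c)$. Excluding the right branch is the delicate point: a crude modulus estimate only rules out large $\sigma$ and leaves a gap just beyond $\lambda_2(c)$. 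The resolution is to use the imaginary part $\tau(2\sigma-c)=-\int_{-h}^0 e^{c\sigma s}\sin(c\tau s)\,d\mu_+(s)$ together with $|\sin x|\le|x|$; dividing by $|\tau|\ne 0$ gives $|2\sigma-c|\le c\int_{-h}^0|s|e^{c\sigma s}\,d\mu_+(s)$, which is precisely $\partial_\sigma\chi(\sigma,c)\le(2\sigma-c)-|2\sigma-c|\le 0$. Since $\partial_\sigma\chi(\cdot,c)$ is strictly increasing with unique zero $\sigma_c\in[\lambda_1(c),\lambda_2(c)]$, this forces $\sigma\le\sigma_c\le\lambda_2(c)$; the right branch is therefore impossible and the dichotomy leaves only $\sigma<\lambda_1(c)$. (At $c=c_*$, where $\sigma_{c_*}=\lambda_1=\lambda_2$, the same two inequalities give $\sigma\le\lambda_1$ and $\sigma\ne\lambda_1$, hence $\sigma<\lambda_1$.) Thus the convexity of $\chi$ on the real line is what both organizes the real-zero count and, through the imaginary-part inequality recast as $\partial_\sigma\chi\le 0$, delivers the spectral dominance.
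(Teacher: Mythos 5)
Your proof is correct, and for the decisive last assertion it takes a genuinely different, more elementary route than the paper's, whose appendix proof explicitly relies on complex-analytic machinery. On the real-zero count the two arguments are close in spirit: the paper substitutes $z=\lambda c$, $\epsilon=c^{-2}$ and plays the strictly concave map $z\mapsto z+q-\epsilon z^2$ against the convex, non-increasing map $z\mapsto\int_{-h}^0 e^{zs}d\mu_+(s)$, obtaining a threshold $\epsilon_*$ by monotonicity in $\epsilon$, whereas you work with $\chi(\cdot,c)$ directly, using strict convexity in $\sigma$ together with the strictly decreasing minimum value $m(c)$; the one step you should tighten is the appeal to the envelope identity $m'(c)=\partial_c\chi(\sigma_c,c)$, which presupposes differentiability of $m$ --- it is cleaner to note that the minimiser is positive because $\partial_\sigma\chi(0,c)=c\bigl(\int_{-h}^0 s\,d\mu_+(s)-1\bigr)\le -c<0$, and then obtain strict monotonicity without any derivative of $m$ from $m(c_2)\le\chi(\sigma_{c_1},c_2)<\chi(\sigma_{c_1},c_1)=m(c_1)$ for $c_1<c_2$, using $\partial_c\chi(\sigma,c)=\sigma\bigl(H'(c\sigma)-1\bigr)<0$ at $\sigma=\sigma_{c_1}>0$. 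The real divergence is in the dominance claim: the paper excludes non-real zeros $w=a+ib$ from the strip $\lambda_1(c)\le\Re z\le\lambda_2(c)$ via the modulus estimate $|w-z_1||w-z_2|>|a-z_1||a-z_2|=ca+q-a^2\ge\int_{-h}^0 e^{acs}d\mu_+(s)$, then invokes Rouch\'e's theorem on large rectangles $[\nu,\zeta]\times[-k,k]$ to show that $\chi(\cdot,c)$ and $z^2-cz-q$ have exactly one zero with $\Re z>\nu$, and needs a separate Hurwitz continuity argument to settle the critical case $c=c_*$; you instead split $\chi(\sigma+i\tau,c)=0$ into real and imaginary parts, where the real part reproduces the strip exclusion ($\chi(\sigma,c)\ge\tau^2>0$, essentially the same content as the paper's modulus estimate, with strictness for free) and the imaginary part, via $|\sin x|\le|x|$, yields $\partial_\sigma\chi(\sigma,c)\le(2\sigma-c)-|2\sigma-c|\le 0$ and hence $\sigma\le\sigma_c\le\lambda_2(c)$, eliminating the right branch outright. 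This trades contour-counting for a pointwise inequality: your version is shorter, requires no choice of rectangles or counting of zeros, and treats $c>c_*$ and $c=c_*$ uniformly (no Hurwitz perturbation), while the paper's Rouch\'e argument has the here-unneeded virtue of localizing the total number of zeros to the right of $\nu$ and of remaining available when explicit trigonometric estimates on the kernel are not.
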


The proof of Lemma \ref{STA} uses standard arguments of the complex analysis, for the convenience of the reader it is given in the appendix. 

\vspace{1mm}

Hence, we have the following existence and uniqueness result for equation (\ref{MG}):
\begin{corollary} \label{MGco} Suppose that the real Lipschitz continuous function $-x+g(x)$ satisfies the monostability and smoothness conditions of {\bf (M)},  {\bf (S)} where the space $C$ 
is replaced with $\R$. If, furthermore, $g'(0) >1$ and  $g(x_2)-g(x_1) \leq g'(0)(x_2-x_1)$ for all $x_1 <x_2$, then 
for each $c\geq c_*$ and for each $h\geq 0$,  the diffusive delayed equation (\ref{MG}) has a unique (up to translation) semi-wavefront.
\end{corollary}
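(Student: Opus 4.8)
The plan is to realise equation (\ref{MG}) as the special case of (\ref{17a}) obtained from the functional $f(\phi) = -\phi(0) + g(\phi(-h))$, and then to check that the conditions {\bf (M)}, {\bf (S)}, {\bf (J)}, {\bf (ND)}, {\bf (LB)}, {\bf (UB)} of Theorem \ref{main} all hold, so that uniqueness is immediate; the existence of a semi-wavefront for each $c\ge c_*$ will be borrowed from the known theory for unimodal monostable equations. Differentiating at $0$ gives $f'(0)\phi = -\phi(0) + g'(0)\phi(-h)$, whence, in the notation of the excerpt, $q=1$, the measure $\mu_+$ is a single atom of mass $g'(0)$ concentrated at $s=-h$, and $p = \int_{-h}^0 d\mu_+(s) = g'(0)$. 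With this identification {\bf (J)} holds trivially, since $\int_{-h}^0 \phi(s)d\mu_-(s) = \phi(0) = q\phi(0)$, and the non-degeneracy condition {\bf (ND)}, namely $p>q$, is precisely the assumed inequality $g'(0)>1$. The characteristic function of Lemma \ref{STA} then reads $\chi(z,c) = z^2 - cz - 1 + g'(0)e^{-czh}$, and $c_*$ is the threshold speed it provides.

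It remains to verify {\bf (M)}, {\bf (S)}, {\bf (UB)} and {\bf (LB)}. Conditions {\bf (M)} and {\bf (S)} are inherited directly from the hypothesis that the scalar map $-x+g(x)$ satisfies the monostability and smoothness requirements on $\R$: since $f(\psi)-f(\phi)-f'(0)(\psi-\phi) = g(\psi(-h))-g(\phi(-h))-g'(0)(\psi(-h)-\phi(-h))$ depends only on the values at $s=-h$, the scalar estimate applied with $x=\phi(-h)$, $y=\psi(-h)$ yields (\ref{S}) after bounding $|\psi(-h)-\phi(-h)|\le|\psi-\phi|_C$ and $|\phi(-h)|,|\psi(-h)|\le|\phi|_C,|\psi|_C$. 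For {\bf (UB)}, if $0<\phi(s)\le\psi(s)$ on $[-h,0]$ then the same difference equals $g(\psi(-h))-g(\phi(-h))-g'(0)(\psi(-h)-\phi(-h))\le 0$ exactly by the one-sided Lipschitz hypothesis $g(x_2)-g(x_1)\le g'(0)(x_2-x_1)$ taken with $x_1=\phi(-h)\le x_2=\psi(-h)$. Finally, for {\bf (LB)} one computes $q\phi(0)+f(\phi) = g(\phi(-h))$ and $\int_{-h}^0\phi(s)d\mu_+(s) = g'(0)\phi(-h)$, so {\bf (LB)} reduces to the inequality $g(y)\ge(1-\epsilon)g'(0)y$ for $0<y\le\delta$; this holds for small $\delta$ because $g(0)=0$ (as $x_1=0$ is a zero of $-x+g(x)$) and $g$ is differentiable at $0$ with $g'(0)>1>0$, so $g(y)/y\to g'(0)$ as $y\to 0^+$.

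With all the structural assumptions confirmed, existence of at least one positive semi-wavefront of (\ref{MG}) for every $c\ge c_*$ follows from the established existence theory for monostable equations with hump-shaped reaction (e.g. \cite{AGT,fzJDE,TPT}), the profile decaying like $e^{\lambda_1(c)\,\xi}$ at $-\infty$ in accordance with the dominant-root structure of $\chi$ described in Lemma \ref{STA}. Given any two such semi-wavefronts of common speed $c\ge c_*$, Theorem \ref{main} then forces $\phi(\cdot+t')\equiv\psi$ for some $t'\in\R$, which is the asserted uniqueness.

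I expect no serious obstacle in the corollary itself: the argument is essentially a dictionary translation of the scalar hypotheses into the functional framework, and the genuine analytic difficulty is already absorbed into Theorem \ref{main}. The only steps that are not purely formal are {\bf (UB)} and {\bf (LB)} --- the former being the sole place where the one-sided Lipschitz/monotonicity hypothesis on $g$ is used, and the latter where the first-order behaviour of $g$ at the origin enters --- together with the appeal to the external existence result, whose compatibility with the threshold speed $c_*$ of Lemma \ref{STA} should be checked when selecting the precise citation.
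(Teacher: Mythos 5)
Your proposal is correct and takes essentially the same approach as the paper: the published proof likewise sets $f(\phi)=-\phi(0)+g(\phi(-h))$, records $f'(0)\phi=-\phi(0)+g'(0)\phi(-h)$ and $q=1<p=g'(0)$, dismisses the remaining verifications as ``an easy task,'' and imports existence from \cite[Theorem 18]{gpt} --- the one citation you flagged as needing to be pinned down. Your explicit checks of {\bf (M)}, {\bf (S)}, {\bf (J)}, {\bf (ND)}, {\bf (UB)} and {\bf (LB)} are precisely the routine verifications the paper omits, and they are all sound.
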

\begin{proof}  For  (\ref{MG}),   the semi-wavefront existence (for all $c\geq c_*$ and for all $h\geq 0$) was proved in  \cite[Theorem 18]{gpt}. With $f(\phi) = -\phi(0) + g(\phi(-h))$, $f'(0)\phi = -\phi(0) + g'(0)\phi(-h)$,  $q=1 < p =g'(0)$,  verification of other assumptions of Theorem \ref{main} is 
an easy task.   \hfill $\square$
\end{proof}
Corollary \ref{MGco} does apply to the above mentioned May diffusive  baleen whale model.\footnote{In is easy to show that in this model each 
semi-wavefront $\phi$ satisfies the inequality $\phi(t) < k, \ t \in \R$.}
\section{Proof of Theorem \ref{main}} \label{S1}
\noindent The proof of Theorem \ref{main} is divided into the following four parts.
\subsection{Proof of the exponential decay of wave profiles  at  $-\infty$.}  By the definition of a semi-wavefront $u(t,x)=\phi(x+ct)$, it holds that  $\phi(-\infty)=0$.  To prove the uniqueness of $\phi$, we need to derive more detailed information concerning asymptotic behavior of $\phi$ at $-\infty$. In this subsection, under assumptions imposed in  the introduction, we establish  that $\phi(t)$ decays exponentially at $-\infty$.  This  property is well known from  \cite{HVL} in the case when  $f$ is $C^1$-smooth and bounded, together with its Fr\'echet derivative, in some vicinity of $0\in C$ and when, in addition, $\chi(z,c)$ does not have zeros on the imaginary axis (clearly, this is true for the KPP-Fisher delayed equation). 
However, for some admissible pairs $(c,f'(0))$, function $\chi(z,c)$ can have purely imaginary zeros, and therefore we should prove  
exponential decay of $\phi$ at $-\infty$ even if $0$ is non-hyperbolic equilibrium of the profile equation
\begin{equation}\label{pf}
\phi''(t) -c\phi'(t) + f (\tilde \phi_t) =0,  \ \phi(-\infty)=0, \ \phi(t) >0, \  \sup_{t \in \R}|\phi(t)| <\infty.
\end{equation}
Here $\tilde \phi_t\in C$ is defined by $\tilde \phi_t(s) = \phi (t+cs), \ s \in [-h,0]$.  Observe that  the analysis  of the rate of  decay of wave profiles at $-\infty$ is an important  part of  proofs of almost all wave uniqueness theorems (e.g. cf. \cite{AGT,CC,dk,mazou, WLR,XX}).  

\begin{lemma} \label{EDE} Assume {\bf (J)},  {\bf (ND)} and {\bf (LB)}, {\bf (UB)}. Then for each semi-wavefront profile $\phi$ there exists $\gamma >0$ such that
$\phi(t)+ |\phi'(t)| = O(e^{\gamma t})$ as $t \to -\infty$. 
\end{lemma}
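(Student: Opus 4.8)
The plan is to reduce the profile equation (\ref{pf}) to a scalar renewal identity and to read the decay of $\phi$ off from it, using the one-sided bounds {\bf (UB)} and {\bf (LB)} together with the strict non-degeneracy $p>q$. First I would record the elementary regularity facts: a semi-wavefront profile is $C^2$ with $\phi,\phi',\phi''$ bounded, and the unique bounded solution of $v'-cv=-f(\tilde\phi_t)$ with $v=\phi'$ admits the variation-of-constants representation $\phi'(t)=\int_t^{\infty}e^{c(t-s)}f(\tilde\phi_s)\,ds$. Since $f$ is continuous with $f(0)=0$ and $\tilde\phi_s\to 0$ in $C$ as $s\to-\infty$, dominated convergence gives $\phi'(-\infty)=0$. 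Integrating (\ref{pf}) from $t$ to a fixed $a$ and letting $t\to-\infty$ then shows that the improper integral $I(t):=\int_{-\infty}^{t}f(\tilde\phi_s)\,ds$ converges, is bounded, and satisfies $I(-\infty)=0$; a second integration against the Green kernel yields the key identity $\phi(t)=\int_t^{\infty}e^{c(t-s)}I(s)\,ds$, equivalently $\phi'(t)=c\phi(t)-I(t)$.

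Next I would establish that $\phi\in L^1(-\infty,a]$. Writing $U[\phi](t):=\int_{-h}^0\phi(t+cr)\,d\mu_+(r)$, condition {\bf (UB)} (in the limit $\phi\to 0^+$, using $f(0)=0$) gives the global bound $f(\tilde\phi_t)\le f'(0)\tilde\phi_t=U[\phi](t)-q\phi(t)\le U[\phi](t)$, while {\bf (LB)} gives, on a half-line $(-\infty,a]$ where $\phi$ is small, the lower bound $f(\tilde\phi_t)\ge (1-\epsilon)U[\phi](t)-q\phi(t)$. Integrating the latter over $(t,a)$, interchanging the order of integration, and using that $w(t):=\int_{-\infty}^t\phi$ is nondecreasing, I would estimate $\int_t^a f$ from below by $((1-\epsilon)p-q)\int_t^a\phi$ minus a bounded term. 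Since $\int_t^a f=I(a)-I(t)$ is bounded uniformly in $t$ and $(1-\epsilon)p-q>0$ for small $\epsilon$ thanks to {\bf (ND)}, this forces $\int_t^a\phi$ to be uniformly bounded, i.e. $\phi\in L^1(-\infty,a]$ and $w$ is finite.

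With $w$ finite I would prove that $w$, hence $\phi$, decays exponentially. Feeding the {\bf (LB)}-lower bound into $w'(t)=\phi(t)=\int_t^{\infty}e^{c(t-s)}I(s)\,ds$ produces a linear integral inequality for $w$ whose associated symbol is exactly $\chi(\cdot,c)$; because $\chi(0,c)=p-q>0$ by {\bf (ND)}, $0$ is not a neutral decay rate, and a bootstrap on the decay exponent (or a contraction argument in an exponentially weighted space, legitimate since $\chi(\cdot,c)>0$ on $[0,\lambda_1(c))$) yields $w(t)=O(e^{\kappa t})$ as $t\to-\infty$ for some $\kappa>0$. The global upper bound $0\le I(t)\le\int_{-h}^0 w(t+cr)\,d\mu_+(r)\le p\,w(t)=O(e^{\kappa t})$ then propagates the rate: inserting it into $\phi(t)=\int_t^{\infty}e^{c(t-s)}I(s)\,ds$ and splitting the integral at $a$ gives $\phi(t)=O(e^{\gamma t})$ with $\gamma=\min\{\kappa,c\}$ (any smaller rate if $\kappa=c$), and $\phi'(t)=c\phi(t)-I(t)$ inherits the same estimate, which is the assertion of the lemma.

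I expect the main obstacle to be the exclusion of sub-exponential (e.g. polynomial) decay precisely when $0$ is a non-hyperbolic equilibrium, i.e. when $\chi(\cdot,c)$ has purely imaginary zeros and the Hale--Verduyn Lunel theory \cite{HVL} does not apply. In Laplace-transform language this is the question of whether the abscissa of convergence of $\int_{-\infty}^a e^{-zt}\phi(t)\,dt$ is \emph{strictly} positive, the delicate point being that $\phi\in L^1$ alone only gives abscissa $\ge 0$. The strict inequality $p>q$ of {\bf (ND)} is exactly what removes this degeneracy, since it is equivalent to $\chi(0,c)=p-q>0$ and makes the effective multiplier $(1-\epsilon)p-q$ in the renewal estimate positive; the remaining technical work is the bookkeeping of the correction terms in that estimate, namely the gap between $w(t+cr)$ and $w(t)$ and the finite window where {\bf (LB)} is not yet in force, which I would absorb by taking $\epsilon$ small and a self-improving choice of the rate.
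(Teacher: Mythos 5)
Your overall architecture is the same as the paper's: extract from {\bf (LB)} and {\bf (ND)} a renewal-type lower inequality with total mass exceeding one, deduce exponential integrability of $\phi$ near $-\infty$, then convert this into a pointwise bound via the consequence $f(\tilde\phi_t)\le f'(0)\tilde\phi_t$ of {\bf (UB)}, and finally treat the derivative through $\phi'(t)=\int_t^{+\infty}e^{c(t-s)}f(\tilde\phi_s)\,ds$, which is literally the paper's formula (\ref{pro}). The paper works with the Green function $K$ of $y''-cy'-(1+q)y=0$ where you use the first-order reduction $\phi'=c\phi-I$; that difference is inessential. Your derivation of $\phi\in L^1(-\infty,a]$ from $(1-\epsilon)p-q>0$ is correct, and your final propagation step is sound modulo a small slip: $I(t)\ge 0$ is not justified pointwise; {\bf (LB)} only gives $I(t)\ge -q\,w(t)$ for $t\le a$, but then $|I(t)|\le p\,w(t)$ there, which is all you need.

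The genuine gap is the middle step — the exclusion of sub-exponential decay, exactly the point you flag as the main obstacle. Neither of your proposed mechanisms works as stated: a contraction argument in an exponentially weighted space presupposes that $w$ already lies in that space, which is the conclusion sought; and a bootstrap on the decay exponent requires a strictly positive exponent to start from, while, as you note yourself, $\phi\in L^1$ only gives abscissa of convergence $\ge 0$. Moreover, the relation coming from {\bf (LB)} is a \emph{lower} renewal inequality, so inserting candidate upper bounds into it is not self-improving, and positivity of $\chi(\cdot,c)$ on $[0,\lambda_1(c))$ does not by itself bridge the degeneracy at exponent zero. What actually closes the gap is a sliding/iteration argument on the monotone function $w(t)=\int_{-\infty}^t\phi$: integrating the pointwise inequality $\phi(t)\ge\int_{-N}^{N}G(s)\phi(t-s)\,ds$ over $(-\infty,T]$ (for $T$ below the smallness threshold) gives $w(T)\ge g_-\,w(T)+g_+\,w(T-N)$, where $g_\pm$ denote the masses of $G$ on $[-N,0]$ and $[0,N]$ with $g_-+g_+>1$; necessarily $g_-<1$, whence $w(T-N)\le\theta\,w(T)$ with $\theta=(1-g_-)/g_+<1$, and iterating yields $w(t)=O(e^{\kappa t})$ and thus the finiteness of $\int_{-\infty}^{0}e^{-\gamma s}\phi(s)\,ds$. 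This is precisely the content of \cite[Theorem 1]{AGT}, which the paper invokes at this very point after establishing the integral inequality (\ref{ine1}); with that result (or the iteration just sketched) substituted for your bootstrap, the rest of your proposal goes through.
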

\begin{proof} Since the wave profile $\phi$ is a  bounded function, it satisfies the integral equation 
\begin{equation}\label{bae}
\phi(t) = \int_{-\infty}^{+\infty} K(t-s)[(1+q)\phi(s)+f(\tilde \phi_s)]ds,\quad t \in \R, 
\end{equation}
where $K$ is the positive Green function (the fundamental solution, cf. \cite{TPTJDE}) of the equation 
$
y''(t)-cy'(t) - (1+q)y(t) =0. 
$
Take $\epsilon >0 $ in  {\bf (LB)} so small that $(1-\epsilon)p > q$ and let $s'$ be such that $\phi(s) < \delta = \delta(\epsilon)$ for all $s \leq s'$.  Consider
$$G(s): = (1-\epsilon)  \int_{-h}^0K(s+c\sigma)d\mu_+(\sigma)+ K(s),  \quad \int_{-\infty}^{+\infty}G(s)ds = \frac{1+(1-\epsilon)p}{1+q} >1,$$
and take $N>ch$ large enough to satisfy $\int_{-N}^NG(s)ds >1$. 
In view of {\bf (LB)}, for each  $t < s'-N-ch$, it holds that 
$$
\phi(t) \geq  \int_{t-N}^{t+N+ch} K(t-s)[\phi(s)+(1-\epsilon) \int_{-h}^0\phi(s+c\sigma)d\mu_+(\sigma)]ds \geq \int_{t-N}^{t+N}G(t-s)\phi(s)ds. 
$$
Thus for $t' < t < s'-2N$, we obtain that 
\begin{equation}\label{ine1}
\int_{t'}^t\phi(v)dv \geq \int_{-N}^NG(s)\int_{t'}^t\phi(v-s)dvds, \quad \mbox{where} \ \int_{-N}^NG(s)ds >1. 
\end{equation}
As it was shown in \cite[Theorem 1]{AGT}, inequality (\ref{ine1}) implies that $\int_{-\infty}^0e^{-\gamma s}\phi(s)ds$ converges for some positive $\gamma $. 

Next, due to {\bf (UB)}, we obtain from (\ref{bae}) that 
\begin{equation}\label{bae}
\phi(t) \leq  \int_{-\infty}^{+\infty} K(t-s)[(1+q)\phi(s)+f'(0)\tilde \phi_s]ds =  \int_{-\infty}^{+\infty} G_1(t-s)\phi(s)ds,\quad t \in \R, 
\end{equation}
where 
$$G_1(s): =  \int_{-h}^0K(s+c\sigma)d\mu_+(\sigma)+ K(s), \ s \in \R.$$
Since the bounded function $K$ satisfies $K(s) = O(e^{ct}), \ t \to -\infty$, we have that $G_1(s) \leq Ae^{cs},$ $s \in \R,$ for some positive $A$. In consequence, for $\gamma  \in (0,c)$ as above, we obtain that $G_1(s) \leq Be^{\gamma  s},$ $ s \in \R,$ for some $B>0$. Thus $\phi(t) = O(e^{\gamma t})$ as $t \to -\infty$ because of 
$$
\phi(t)e^{-\gamma  t} \leq  \int_{-\infty}^{+\infty} G_1(t-s)e^{-\gamma (t-s)}\phi(s)e^{-\gamma  s}ds \leq B\int_{-\infty}^{+\infty}e^{-\gamma s}\phi(s)ds=:D <\infty ,\quad t \in \R. 
$$
Similarly, solving (\ref{pf}) with respect to $\phi'(t)$, we find that 
\begin{equation}\label{pro}
\phi'(t) = \int_t^{+\infty}e^{c(t-s)}f(\tilde \phi_s)ds, \ t \in \R.
\end{equation}
Next, {\bf (LB)}, {\bf (UB)} and the exponential estimate for $\phi(t)$ implies that, for some $T_1 \in \R$,  $D_1 >0$,  
$$
|f(\tilde \phi_s)| \leq q\phi(s)+\int_{-h}^0\phi(s+cs)d\mu_+(s)\leq D_1e^{\gamma s}, \quad s \leq T_1. 
$$
Since $|f(\tilde \phi_s)|$ is a bounded function on $\R$ (cf. {\bf (S)}),  for some $D_2>0$, we conclude that $|f(\tilde \phi_s)| \leq  D_2e^{\gamma s}, \ s \in \R$. 
 Then (\ref{pro}) implies the following: 
$$
|\phi'(t)|e^{-\gamma t} \leq \int_t^{+\infty}e^{c(t-s)}D_2e^{-\gamma (t-s)} ds = \frac{D_2}{c-\gamma}, \ t \in \R.
$$
This completes the proof of Lemma \ref{EDE}. \hfill $\square$
\end{proof}

\subsection{Non-existence of super-exponentially decaying solutions at $-\infty$. } 
We will also need the following nonlinear version of Lemma 3.6 in \cite{VT}. It
excludes the existence of small solutions to asymptotically autonomous delayed differential equations at $-\infty$: 
\begin{lemma} \label{sms} Suppose that $L:C([-h,0],\R^n) \to \R^n$ is continuous linear operator 
and $M: (-\infty, 0] \times C([-h,0],\R^n) \to \R^n$ is a continuous function such that 
$|M(t,\phi)| \leq \mu(t)|\phi|_C$ for some non-negative $\mu(t) \to 0$ as $t\to -\infty$.
Then the system 
\begin{equation} \label{LM}
x'(t) = Lx_t+M(t,x_t), \  x_t(s):= x(t+s), \ s \in [-h,0],
\end{equation}
does not have nontrivial exponentially small solutions at $-\infty$ (i.e. non-zero solutions $x:\R_- \to \R^n$ such that for each $\gamma \in \R$ it holds that $x(t)e^{\gamma t} \to 0, \ t \to -\infty$). 
\end{lemma}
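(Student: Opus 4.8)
The plan is to reduce the asymptotically autonomous, nonlinear system (\ref{LM}) to the autonomous linear equation $y'(t)=Ly_t$ by the variation-of-constants formula, and then to absorb the perturbation $M$ into a contraction in an exponentially weighted norm. Assume, for contradiction, that $x:\R_-\to\R^n$ is a solution of (\ref{LM}) satisfying $x(t)e^{\gamma t}\to 0$ as $t\to-\infty$ for every $\gamma\in\R$, yet $x\not\equiv 0$; I will show this is impossible. The decisive structural feature to exploit is that $|M(t,\phi)|\le\mu(t)|\phi|_C$ with $\mu(t)\to 0$, so that $M$ contributes nothing to the genuine (autonomous) asymptotics at $-\infty$ and can be made quantitatively negligible against a sufficiently strong exponential weight.

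First I would recall that the autonomous equation $y'(t)=Ly_t$ generates a solution semigroup $T(\tau)$ and a fundamental matrix $X(\cdot)$ with $|T(\tau)|,\,|X(\tau)|\le Ke^{\beta\tau}$ for $\tau\ge 0$, where $K>0$ and $\beta$ exceeds the supremum of the real parts of the zeros of the characteristic matrix $zI-L(e^{z\cdot}I)$ (a standard fact for retarded systems, cf. \cite{HVL}). Treating $g(t):=M(t,x_t)$ as a given forcing, the variation-of-constants formula on $[t_0,t]$ reads $x_t=T(t-t_0)x_{t_0}+\int_{t_0}^{t}X(t-s)g(s)\,ds$. Since $|x_{t_0}|_C$ decays faster than any exponential while $|T(t-t_0)|\le Ke^{\beta(t-t_0)}$, the homogeneous term tends to $0$ as $t_0\to-\infty$; equivalently, its limit is a super-exponentially small solution of the autonomous equation and hence vanishes, which is exactly the linear case covered by Lemma 3.6 in \cite{VT}. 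Because $|g(s)|\le\mu(s)|x_s|_C$ is itself super-exponentially small, the integral converges on $(-\infty,t]$, and I obtain the representation $x(t)=\int_{-\infty}^{t}X(t-s)M(s,x_s)\,ds$ valid for all $t\le 0$.

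It then remains to close the argument by a contraction estimate in the weighted norm $\|x\|_\gamma:=\sup_{t\le 0}e^{-\gamma t}|x(t)|$, which is finite for every $\gamma$ precisely because $x$ is super-exponentially small. For $\gamma>0$ and $s\le 0$ one has $|x_s|_C\le e^{\gamma s}\|x\|_\gamma$, since $s+\theta\le s\le 0$ and $e^{\gamma\theta}\le 1$ on $[-h,0]$. Writing $\mu_0:=\sup_{s\le 0}\mu(s)<\infty$, the representation above gives $e^{-\gamma t}|x(t)|\le K\mu_0 e^{(\beta-\gamma)t}\int_{-\infty}^{t}e^{(\gamma-\beta)s}\,ds\,\|x\|_\gamma=\frac{K\mu_0}{\gamma-\beta}\|x\|_\gamma$ for all $t\le 0$, whence $\|x\|_\gamma\le \frac{K\mu_0}{\gamma-\beta}\|x\|_\gamma$. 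Choosing any $\gamma>\beta+K\mu_0$ makes the factor strictly less than $1$, forcing $\|x\|_\gamma=0$, i.e. $x\equiv 0$, contrary to assumption.

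The main obstacle I anticipate is not the final contraction, which is routine, but the rigorous justification of the variation-of-constants representation on the unbounded interval $(-\infty,0]$: one must check that the formula is legitimate for a merely continuous forcing $g=M(\cdot,x_\cdot)$, that the homogeneous contribution at $-\infty$ actually vanishes (this is the one point where the super-exponential decay of $x$ is indispensable, via the growth bound $|T(\tau)|\le Ke^{\beta\tau}$), and that the resulting convolution inherits super-exponential smallness so that passing to the limit $t_0\to-\infty$ is admissible and coincides with the linear conclusion of \cite{VT}. Once this reduction to the autonomous linear setting is secured, the hypothesis $\mu(t)\to 0$ supplies all the remaining smallness needed.
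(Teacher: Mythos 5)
Your proof is correct, but it follows a genuinely different route from the paper's. The paper argues by renormalization and compactness: assuming a small solution exists, it first shows $\inf_{t\le 0}|x_{t-b}|_b/|x_t|_b=0$, extracts times $t_j\to-\infty$ with $|x_{t_j-3h}|_{3h}/|x_{t_j}|_{3h}\to 0$, rescales $y_j(t)=x(t+t_j)/|x(s_j)|$, and uses Arzel\`a--Ascoli (this is where $\mu(t)\to 0$ enters, killing the perturbation in the limit) to produce a limit $y_*$ solving the autonomous equation $y'=Ly_t$ with identically zero segment on $[-5h,-3h]$ yet $|y_*(-\theta_*)|=1$, contradicting forward uniqueness for the linear initial value problem. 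You instead represent the solution via variation-of-constants, let $t_0\to-\infty$ to obtain $x(t)=\int_{-\infty}^t X(t-s)M(s,x_s)\,ds$, and close with a contraction in the weight $e^{-\gamma t}$; this is sound: the homogeneous term dies by the growth bound alone (your appeal to Lemma 3.6 of \cite{VT} at that point is unnecessary and borderline circular, since the lemma being proved is itself a version of that result — the direct estimate $Ke^{\beta(t-t_0)}|x_{t_0}|_C\to 0$ suffices), and the finiteness of $\|x\|_\gamma$ for \emph{every} $\gamma$, which is exactly super-exponential smallness, is what licenses the choice $\gamma>\beta+K\mu_0$. Comparing the two: your route imports heavier machinery from \cite{HVL} (solution semigroup, fundamental matrix, exponential growth bounds, variation-of-constants for merely continuous forcing), whereas the paper's argument is elementary and essentially self-contained, which is presumably why it was chosen. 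In exchange, your argument is quantitative and proves slightly more: you never use $\mu(t)\to 0$, only $\mu_0=\sup_{s\le 0}\mu(s)<\infty$ — a bound the paper's proof also implicitly invokes when estimating $|y_j'(t)|\le\|L\|+\sup_{s\le 0}\mu(s)$ — and your weighted estimate handles automatically the degenerate case where $x$ vanishes identically near $-\infty$, in which the paper's ratio $|x_{t-b}|_b/|x_t|_b$ is ill-defined.
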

\begin{proof}  The proof of Lema \ref{sms} is a slight modification of the proof of Lemma 3.6 in e-print \cite{VT}:   
for the reader's convenience,  it is included in this paper.  So, on the contrary, suppose that there exists a small solution $x(t)$ of $(\ref{LM})$ at $-\infty$. Take some $b>h$.  It is straightforward  to see that the property 
 $x(t)e^{\gamma t} \to 0,$  $t \to -\infty,$ is equivalent to  $|x_t|_be^{\gamma t} \to 0, \ t \to -\infty$, where $|x_t|_b = \max_{s \in [-b,0]}|x(t+s)|$. We claim that smallness of 
 $x(t)$ implies that 
 $
 \inf_{t \leq 0} |x_{t-b}|_b/|x_t|_b =0. 
 $
 Indeed, otherwise there is $K>0$ such that $|x_{t-b}|_b/|x_t|_b \geq K, \ t \leq 0,$ and therefore, setting $\nu:= b^{-1}\ln K$, we obtain the following contradiction: 
 $$
0<  |x_t|_be^{\nu t} \leq |x_{t-b}|_be^{\nu (t-b)}\leq |x_{t-2b}|_be^{\nu (t-2b)}\leq\dots\leq |x_{t-mb}|_be^{\nu (t-mb)} \to 0, \quad m \to +\infty.  
$$
Hence, for $b=3h$  there is  a sequence $t_j\to -\infty$ such that  $|x_{t_j-3h}|_{3h}/|x_{t_j}|_{3h} \to 0$ as $j \to \infty$. Clearly, 
$|x_{t_j}|_{3h} = |x(s_j)|$ for some $s_j \in [t_j-3h,t_j]$ and, for all large $j$, it holds $|x(s_j)| \geq |x(s)|, \ s \in [t_j-6h,t_j]$.   Since $0\leq t_j-s_j \leq 3h$, 
without loss of generality we can assume that $\theta_j:= t_j-s_j \to \theta_* \in [0,3h]$. 

Now, for sufficiently large $j$, consider the sequence 
of functions $$
y_j(t)= \frac{x(t+t_j)}{|x(s_j)|}, \ t \in [-6h,0], \quad |y_j(-\theta_j)| =1, \quad  |y_j(t)| \leq 1, \ t \in [-6h,0]. 
$$
For each $j$, $
y_j(t)$ satisfies the equations 
$$
y'(t) = Ly_t+\frac{M(t+t_j,x_{t+t_j})}{|x(s_j)|}, \quad y_j(t) = y_j(-\theta_j)+ \int^t_{-\theta_j}\left(Ly_u+\frac{M(u+t_j,x_{u+t_j})}{|x(s_j)|}\right)du, 
$$
and therefore   
$
 |y_j(t)| \leq 1, \ |y'(t)| \leq \|L\|+\sup_{s \leq t_j} \mu(s) \leq \|L\|+\sup_{s \leq 0} \mu(s),$ $t\in  [-5h,0],$ $j \in \N
$ (here $\|\cdot\|$ denotes the operator norm).
Thus, due to the Arzel\`a-Ascoli theorem,  there exists a subsequence $y_{j_k}(t)$ converging, uniformly on  $[-5h,0]$, to some continuous function $y_*(t)$ such that 
$|y_*(-\theta_*)|=1, $
$$
y_*(t) = y_*(-\theta_*)+ \int^t_{-\theta_*}L(y_*)_udu,\quad  t\in  [-4h,0]. 
$$
In particular, $y'_*(t) = L(y_*)_t,\  t\in  [-4h,0]$.  Since $y_*(t)= 0$ for all $t\in [-5h,-3h]$,  the existence and uniqueness theorem applied to the initial 
value problem $y'(t) = Ly_t,$ $ t\in  [-3h,0], \ y_{-3h} =0,$ implies that  also $y_*(t)= 0$  for all $t \in [-3h,0]$. However, this contradicts that
$|y_*(-\theta)|=1$. The proof of Lemma \ref{sms} is completed. 
\hfill $\square$
\end{proof}

\subsection{Asymptotic representations of semi-wavefronts   at  $-\infty$.}
The estimate obtained in  Lemma \ref{EDE} can be considerably improved: 
\begin{lemma} \label{EDE2} Assume {\bf (J)},  {\bf (ND)}, {\bf (S)}  and {\bf (LB)}, {\bf (UB)}. Then there exists some $\epsilon >0$ and $t_0 \in \R$ such that 
\vspace{1mm}
$$
(i)\ \ \mbox{if} \ c >c_* \ \mbox{then}\quad   (\phi(t+t_0), \phi'(t+t_0)) = (1,\lambda_1(c)) e^{\lambda_1(c)t} + O(e^{(\lambda_1(c)+\epsilon)t}), \ t \to -\infty; 
$$ 
$$
\hspace{-5mm}(ii)\ \ \mbox{if} \ c =c_* \ \mbox{then}\quad   (\phi(t+t_0), \phi'(t+t_0)) = -(1,\lambda_1(c)) e^{\lambda_1(c)t}(t + O(1)), \ t \to -\infty. 
$$  
\end{lemma}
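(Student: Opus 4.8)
The plan is to upgrade the crude bound of Lemma \ref{EDE} to a sharp one by locating the exact exponential order of $\phi$ at $-\infty$ and then reading off the leading term from the dominant zero $\lambda_1(c)$ of $\chi(z,c)$. First I recast (\ref{pf}) as a perturbed autonomous system for $x(t)=(\phi(t),\phi'(t))$: splitting $f(\tilde\phi_t)=f'(0)\tilde\phi_t+N(t)$ with $N(t):=f(\tilde\phi_t)-f'(0)\tilde\phi_t$, condition {\bf (S)} (with $f(0)=0$) yields $|N(t)|\le K|\tilde\phi_t|_C^{1+\alpha}$, so by Lemma \ref{EDE} the perturbation is superlinearly small, $N(t)=O(e^{(1+\alpha)\gamma t})$, and the linear part has characteristic function exactly $\chi(z,c)$; this is the asymptotically autonomous setting of Lemma \ref{sms}. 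I also record that {\bf (UB)} forces $N(t)\le 0$ for all $t$ (compare $\tilde\phi_t$ with small positive constants and let them tend to $0$).

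Next I pin down the order. Using the Green-function representation of $\phi$ from the proof of Lemma \ref{EDE} one gets a convolution identity of the form $\phi=G_1*\phi+K*N$, whose bilateral Laplace transform, valid on the strip $0<\Re z<\beta$ with $\beta$ the abscissa of convergence, collapses after cancelling $\hat K$ to the clean relation $\hat\phi(z)\,\chi(z,c)=-\hat N(z)$. Lemma \ref{sms} rules out super-exponentially small solutions, so $\beta<\infty$; by a Pringsheim-type argument (positivity of $\phi$) the abscissa $z=\beta$ is a real singular point of $\hat\phi$, hence a positive zero of $\chi$, so $\beta\in\{\lambda_1(c),\lambda_2(c)\}$. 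To exclude $\beta=\lambda_2(c)$ I use that $\hat N(\lambda_1)=\int_{\R}N(t)e^{-\lambda_1 t}dt\le 0$, with equality only if $N\equiv0$; but $N\equiv0$ would make $\phi$ a nontrivial bounded positive global solution of the autonomous linear profile equation, which is impossible since any solution decaying at $-\infty$ is built from modes $e^{\lambda_j t}$ with $\Re\lambda_j>0$ that blow up at $+\infty$. Thus $\hat N(\lambda_1)<0$, so if $\beta=\lambda_2$ the analyticity of $\hat\phi$ at $\lambda_1$ would force $\hat N(\lambda_1)=0$, a contradiction; therefore $\beta=\lambda_1(c)$.

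Then I extract the expansion from $\hat\phi=-\hat N/\chi$, which is meromorphic up to $\Re z<(1+\alpha)\lambda_1$ (the domain of analyticity of $\hat N$ once $\beta=\lambda_1$). By Lemma \ref{STA} every zero of $\chi$ other than $\lambda_1,\lambda_2$ has real part $<\lambda_1$, so the only singularity of $\hat\phi$ in $\Re z\le\lambda_1+\epsilon$ is the one at $\lambda_1$ as soon as $\epsilon<\min\{\lambda_2-\lambda_1,\alpha\lambda_1\}$. Since $\hat\phi=O(|z|^{-2})$ on vertical lines, I shift the inversion contour to $\Re z=\lambda_1+\epsilon$ (which decreases $e^{\Re z\,t}$ as $t\to-\infty$) and collect the residue at $\lambda_1$. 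For $c>c_*$ the zero $\lambda_1$ is simple, the pole is simple, and one gets $\phi(t)=Ce^{\lambda_1 t}+O(e^{(\lambda_1+\epsilon)t})$ with $C>0$ forced by positivity; differentiating, $\phi'(t)=C\lambda_1 e^{\lambda_1 t}+\dots$, so the vector factor is $(1,\lambda_1)$, and a translation $t\mapsto t+t_0$ normalizes $C=1$, which is (i). For $c=c_*$ the zero is double, the pole is double, the leading term is $Cte^{\lambda_1 t}$, and since $t<0$ near $-\infty$ positivity forces $C<0$; after the normalizing shift this reads $-(1,\lambda_1)e^{\lambda_1 t}(t+O(1))$, which is (ii).

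The main obstacle is the passage from the crude bound to the sharp leading term with a genuinely faster remainder. Three points need care: (a) making the bilateral transform rigorous even though $\phi$ does not decay at $+\infty$, so that one works in a finite strip and tracks the Green-function boundary contributions; (b) excluding the steeper rate $\lambda_2(c)$, which is exactly where the sign $N\le0$ furnished by {\bf (UB)} is decisive; and (c) the non-hyperbolic case $c=c_*$, where the dominant zero is double and one must produce the anomalous factor $t$ and fix its sign. Securing the finiteness of the decay order through Lemma \ref{sms} and establishing the residue/contour estimate for the remainder are the technical heart of the argument; the alternative route via an Ikehara--Delange Tauberian theorem leads to the same conclusions.
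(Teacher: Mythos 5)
Your proposal is correct and follows essentially the same route as the paper: both recast the profile equation as a linear inhomogeneous equation whose forcing $Q(t)=f'(0)\tilde\phi_t-f(\tilde\phi_t)$ is non-negative by \textbf{(UB)} and of size $O(|\tilde\phi_t|_C^{1+\alpha})$ by \textbf{(S)}, rule out the degenerate case $Q\equiv 0$ (equivalently $\int_{\R}e^{-\lambda_1(c)s}Q(s)\,ds=0$) through Lemma \ref{sms} together with the boundedness/positivity of $\phi$, and then obtain the leading terms from the simple (respectively double) residue of $e^{zt}\widehat{Q}(z)/\chi(z,c)$ at $\lambda_1(c)$, normalizing by a translation. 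The only real difference is presentational: where you rederive the bilateral Laplace-transform machinery by hand (Landau--Pringsheim singularity at the convergence abscissa, exclusion of the faster rate $\lambda_2(c)$, contour shift to $\Re z=\lambda_1(c)+\epsilon$), the paper invokes Lemma 28 of \cite{GT} as a black box, which packages exactly these steps, and it obtains the $\phi'$ asymptotics by integrating the profile equation rather than by your (slightly glib, but repairable via inversion of $z\widehat{\phi}(z)$) differentiation of the expansion.
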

\begin{proof} Clearly, $\phi(t)$ satisfies the linear inhomogeneous equation  
\begin{equation}\label{pfas}
\phi''(t) -c\phi'(t) -q\phi(t)+ \int_{-h}^0\phi(t+cs)d\mu_+(s) =  Q(t), 
\end{equation}
where, due to assumptions  {\bf (S)} and  {\bf (UB)},  for some $T_2\in \R$ and $C>0$,
$$
Q(t):= f'(0)\tilde \phi_t - f (\tilde \phi_t) \geq 0,\  t \in \R;  \quad |Q(t)| \leq C |\phi_t|^{1+\alpha}_C , \ t \leq T_2. 
$$
Then, in view of the positivity of $\phi(t)$, Lemma 5  together with \cite[Lemma 28]{GT}\footnote{Lemma 28 in \cite{GT} was proved for the case of a single discrete delay, however its proof is valid without significant modifications for the case of delays distributed on a fixed finite interval.} imply that, for sufficiently small
$\varepsilon >0$, it holds that
$$
\phi(t)= - {\rm Res}_{z=\lambda_1(c)} \left [\frac{e^{zt}}{\chi(z,c)} \int_{-\infty}^{+\infty}e^{-zs}Q(s)ds\right] 
+ O(e^{(\lambda_1(c)+\varepsilon)t}), \quad t \to -\infty. 
$$
A straightforward calculation of the above residue (cf. \cite{AGT, GT}) implies the asymptotic formulas for $\phi(t)$ in both cases, $c=c_*$ and $c > c_*$,  whenever  
\begin{equation}\label{pi}
\int_{-\infty}^{+\infty}e^{-\lambda_1(c)s}Q(s)ds >0. 
\end{equation}
Now,  suppose that (\ref{pi}) does not hold. Then $Q(t) \equiv 0$ on $\R$ and, consequently, $\phi(t)$ solves the homogeneous equation 
$$
\phi''(t) -c\phi'(t) -q\phi(t)+ \int_{-h}^0\phi(t+cs)d\mu_+(s) =  0, \ t \in \R.
$$
By Lemma \ref{sms}, this equation does not have nontrivial small solutions at $-\infty$.  But then \cite[Lemma 28]{GT} and Lemma \ref{STA} assure
that $\phi(t)$ is a linear combination of the eigenfunctions $e^{\lambda_1(c) t}$,  $e^{\lambda_2(c) t}$.  This means that $\phi(t)$ is unbounded on $\R$, a contradiction proving inequality (\ref{pi}). 

Finally, assuming $c > c_*$ and integrating $(\ref{pfas})$ over $(-\infty,t)$, we find that
$$
\phi'(t) = c\phi(t) +\int_t^{+\infty}(q\phi(s) - \int_{-h}^0\phi(s+c\sigma)d\mu_+(\sigma) +Q(s))ds =\lambda_1(c)e^{\lambda_1(c)t}+ V(t), 
$$ 
where $V(t)= O(e^{(\lambda_1(c)+\varepsilon)t}), \quad t \to -\infty$. A similar computation in the case $c=c_*$ ends the proof of Lemma \ref{EDE2}. 
\hfill $\square$
\end{proof}

\subsection{Proof of the semi-wavefront uniqueness.} Suppose that $\phi, \psi$ satisfy (\ref{pf}) and for some $\lambda >0$, set 
$y(t) = (\psi(t)-\phi(t))e^{-\lambda t}$. Then 
\begin{equation} \label{ast}
y''(t)-(c-2\lambda)y'(t)+(\lambda^2 - c\lambda-q)y(t)+ \int_{-h}^0y(t+cs)e^{\lambda c s}d\mu_+(s)+ R(t,\tilde y_t) =0, 
\end{equation}
where 
$$
R(t,\tilde y_t)=  e^{-\lambda t}\left[f(\tilde \psi_t)- f(\tilde \phi_t) -f'(0)(\tilde \psi_t- \tilde \phi_t) \right]. 
$$
Then condition {\bf (S)} implies that, for some  $T$ and $K>0$, 
$$|R(t,\tilde y_t)| \leq K |y(t+c\cdot)|_C(|\tilde \phi_t|_C^{\alpha}+|\tilde \psi_t|_C^{\alpha}), \quad t \leq T.$$
First, we consider the non-critical case when $c> c_*$ and $\lambda_1(c) < \lambda_2(c)$.  Choose some $\lambda \in (\lambda_1(c), \lambda_2(c))$.
By Lemma \ref{EDE2}, without loss of generality, we can assume that 
$$
(\psi(t), \psi'(t)) \ \mbox{and also}\   (\phi(t), \phi'(t)) = (1,\lambda_1(c)) e^{\lambda_1(c)t} + O(e^{(\lambda_1(c)+\epsilon)t}), \ t \to -\infty. 
$$
Clearly, this implies that $(y(t),y'(t)) = O(e^{(\lambda_1(c)-\lambda +\epsilon)t})$ and  $R(t,\tilde y_t) = O(e^{\lambda_1(c)\alpha t}|\tilde y_t|_C)$ as $t \to -\infty$. Then $y(-\infty)=0$ and 
arguing as in \cite[Proposition 6.1]{MP}, we conclude that 
$$
y(t) = A e^{(\lambda_2(c)-\lambda) t} + O(e^{(\lambda_2(c)-\lambda +\epsilon)t}), \ t \to -\infty. 
$$
By interchanging, if necessary, the roles of $\phi$ and $\psi$, we may assume that $A\geq 0$. 

Suppose first that  $A=0$, then $y(t) =  O(e^{(\lambda_2(c)-\lambda +\epsilon)t}), \ t \to -\infty$. Note that  the eigenvalues of the homogeneous part of equation (\ref{ast}) coincide with the zeros of $\chi(z,c)$ shifted by $-\lambda$. Therefore, since  $\chi(z,c)$ does not have eigenvalues with $\Re \lambda_j > \lambda_2(c)$,  by \cite[Proposition 6.1]{MP} we conclude that $y(t)$ is a small solution of  the asymptotically autonomous (at $-\infty$) linear equation (\ref{ast}).  Invoking now Lemma \ref{sms}, we conclude that $y(t)\equiv 0$. This means that $\phi(t) \equiv \psi(t)$ in the case when $A=0$. 
 
 Next, suppose that $A>0$. Then $y(t) >0$ on some maximal interval $\mathcal{I}= (-\infty, \theta)$, $\theta \in \R\cup \{+\infty\}$. Since $y(-\infty)=y(\theta)=0$, 
 we conclude that $y(t), \ t \in \mathcal{I},$ reaches its positive absolute maximum at some point $\zeta \in \mathcal{I}$, where $y(\zeta) > 0$ and 
 $$
 y''(\zeta)\leq 0, \ y'(\zeta)=0,  \ \int_{-h}^0y(\zeta+cs)e^{\lambda c s}d\mu_+(s) \leq \int_{-h}^0e^{\lambda c s}d\mu_+(s)y(\zeta), \ R(\zeta,\tilde y_\zeta) \leq 0. 
 $$
However, then (\ref{ast}) yields the following contradiction: 
$$
0 = y''(\zeta)-(c-2\lambda)y'(\zeta)+(\lambda^2 - c\lambda-q)y(\zeta)+ \int_{-h}^0y(\zeta+cs)e^{\lambda c s}d\mu_+(s)+ R(\zeta,\tilde y_\zeta) \leq
$$
$$
(\lambda^2 - c\lambda-q+  \int_{-h}^0e^{\lambda c s}d\mu_+(s))y(\zeta) = \chi(\lambda,c) y(\zeta) <0.   
$$
This proves the uniqueness of every non-critical semi-wavefront. 

Finally, we consider  critical case, $c= c_*$ (usually more difficult, cf. \cite{AGT, CC}). Then  $\lambda_1(c) = \lambda_2(c)$ and we take  $\lambda = \lambda_1(c)$.  We will need the following equivalent form of  the relations $\chi(\lambda,c) = \chi'(\lambda,c)=0$:
\begin{equation}\label{rel2}
\lambda^2 - c\lambda-q+  \int_{-h}^0e^{\lambda c s}d\mu_+(s)=0, \quad 2\lambda - c+  \int_{-h}^0cse^{\lambda c s}d\mu_+(s)=0.
\end{equation}
Again invoking  Lemma \ref{EDE2}, without loss of generality we can assume that 
$$
(\psi(t), \psi'(t)) \ \mbox{and also}\   (\phi(t), \phi'(t)) = -(1,\lambda_1(c)) e^{\lambda_1(c)t}(t+O(1)), \ t \to -\infty. 
$$
Clearly, this implies that $(y(t),y'(t)) = O(1)$ while  $R(t, \tilde y_t) = O(|\tilde y_t|_Ce^{\lambda_1(c)\alpha t})$ as $t \to -\infty$. By  \cite[Proposition 6.1]{MP}, we conclude that, for some small $\epsilon >0,$
$$
y(t) = B  + O(e^{\epsilon t}), \quad y'(t) =  O(e^{\epsilon t}),  \ t \to -\infty, \quad y(+\infty)=0. 
$$
By interchanging, if necessary, the roles of $\phi$ and $\psi$, we may assume that $B\geq 0$. If $B=0$, then the same argument as in the non-critical case with  $A=0$ shows that  $y(t)\equiv 0$ proving the uniqueness of the semi-wavefront profile. If $B>0$, then $y(t) >0$ on some maximal interval $\mathcal{I}= (-\infty, \theta)$, $\theta \in \R\cup \{+\infty\}$.  Using (\ref{rel2}), we can rewrite  equation (\ref{ast})  as follows: 
$$
0 = y''(t)-(c-2\lambda)y'(t)+(\lambda^2 - c\lambda-q)y(t)+ \int_{-h}^0y(t+cs)e^{\lambda c s}d\mu_+(s)+ R(t,\tilde y_t) =
$$
$$
y''(t)-(c-2\lambda)y'(t)+\int_{-h}^0[y(t+cs)-y(t)]e^{\lambda c s}d\mu_+(s)+ R(t,\tilde y_t) =
$$
$$
 y''(t)- \int_{-h}^0cse^{\lambda c s}d\mu_+(s) y'(t)+\int_{-h}^0\int_0^1cs y'(t+c\sigma s)d\sigma e^{\lambda c s}d\mu_+(s)+ R(t,\tilde y_t) =
$$
\begin{equation}\label{sed}
 y''(t)+\int_{-h}^0\int_0^1cs [y'(t+c\sigma s)-y'(t)]d\sigma e^{\lambda c s}d\mu_+(s)+ R(t,\tilde y_t) =0. 
\end{equation}
Thus 
\begin{equation}\label{seda}
y''(t)+\int_{-h}^0 (cs)^2 e^{\lambda c s} d\mu_+(s) \int_0^1\sigma d\sigma  \int_0^1y''(t+c\tau\sigma s)d\tau  + R(t,\tilde y_t) =0.
\end{equation}
Because of (\ref{sed}), similarly to $y'(t)$ and $R(t,\tilde y_t)$, the second derivative $y''(t)$ has exponential decay at $-\infty$ and $+\infty$. Therefore,  integrating (\ref{seda}) with respect to $t$ on $(-\infty,\theta]$ and using Fubini's theorem, we find that 
$$
0= y'(\theta)+\int_{-h}^0 (cs)^2 e^{\lambda c s} d\mu_+(s) \int_0^1\sigma d\sigma  \int_0^1y'(\theta+c\tau\sigma s)d\tau  + \int_{-\infty}^\theta R(t,\tilde y_t)dt= 
$$
$$
 y'(\theta)+\int_{-h}^0 cs e^{\lambda c s} d\mu_+(s) \int_0^1 [y(\theta+c\sigma s)-y(\theta)] d\sigma   + \int_{-\infty}^\theta R(t,\tilde y_t)dt= 
$$
$$
 y'(\theta)+\int_{-h}^0 cs e^{\lambda c s} d\mu_+(s) \int_0^1 y(\theta+c\sigma s) d\sigma   + \int_{-\infty}^\theta R(t,\tilde y_t)dt.
$$
Since all three terms in the latter line are non-negative, we conclude that 
$$
y'(\theta)=\int_{-h}^0 cs e^{\lambda c s} d\mu_+(s) \int_0^1 y(\theta+c\sigma s) d\sigma  = \int_{-\infty}^\theta R(t,\tilde y_t)dt=0. 
$$
In addition, we know that  continuous function $R$ satisfies $R(t,\tilde y_t)\leq 0$ on $(-\infty,\theta]$. 
This means that $R(t,\tilde y_t)= 0$ for all $t \in (-\infty,\theta]$ so that $y(t)$ is a bounded solution of the linear equation 
$$
y''(t)-(c-2\lambda)y'(t)+(\lambda^2 - c\lambda-q)y(t)+ \int_{-h}^0y(t+cs)e^{\lambda c s}d\mu_+(s)=0, \quad t \leq \theta.
$$
Thus $y(t)\equiv B$ on $(-\infty,\theta]$ and, in particular, $y(\theta)=B >0$. The obtained contradiction shows that actually $B=0$ that completes  the proof of Theorem \ref{main}.  \hfill $\square$

\section*{Appendix}
This section contains the proof of Lemma \ref{STA}.  

By  {\bf (ND)}   and (\ref{rel2}), equation $\chi(z,c) =0$ does  not have real roots when $c=0$. If $c>0$, then it can be rewritten in the following equivalent form 
\begin{equation}\label{ep}
z+q-\epsilon z^2 = \int_{-h}^0e^{zs}d\mu_+(s), \quad z:=\lambda c, \ \  \epsilon = c^{-2}.
\end{equation}
On the left [respectively, right] side of (\ref{ep}) we have a  strictly convex upward [respectively, downward]  function, so that equation (\ref{ep}) can have at most two real roots counting multiplicity. 
In fact, since $p>q$ and  positive function $\int_{-h}^0e^{zs}d\mu_+(s)$ is non-increasing in $z$,  we deduce the existence of $\epsilon_*>0$ such that (\ref{ep})  has exactly  
two simple real roots if and only if $\epsilon \in (0, \epsilon_*)$ and has a double real root if and only if $\epsilon=\epsilon_*$.  

The above analysis implies  all conclusions of 
Lemma \ref{STA} except for the last (and key) assertion concerning the dominance of the real zeros $\lambda_1(c) \leq \lambda_2(c)$. 
Actually, the vertical strip $\lambda_1(c) \leq \Re z \leq  \lambda_2(c)$ does not contain any complex zero $w=a+ib, \ b \not =0$ of $\chi(z,c)$ for otherwise, with $z_1 <0\leq z_2$ denoting the roots of $z^2-cz-q=0$,  we get the following contradiction: $|w^2-cw-q| =  $
$$
|w -z_1||w-z_2| > |a -z_1||a-z_2| =  ca+q -a^2 \geq   \int_{-h}^0e^{acs}d\mu_+(s) \geq |\int_{-h}^0e^{wcs}d\mu_+(s)|.
$$
Note that $z_1 < \lambda_1(c)\leq  \lambda_1(c) < z_2$.  

Now,  consider $c >c_*$ and choose $\nu$  such that $\lambda_1(c) < \nu < \lambda_2(c)$. The above argument shows that 
$|w^2-cw-q| >  |\int_{-h}^0e^{wcs}d\mu_+(s)|$ for all $w \in \Gamma$, where $\Gamma$ is the boundary of the rectangle 
$[\nu, \zeta] \times [-k,k] \in \R^2\simeq \C$ with $\zeta$ and $k$ sufficiently large. By Rouch\'e's theorem this implies that 
$\chi(z,c)$ and $z^2-cz-q$ have the same number of zeros with $\Re z > \nu$, i.e. exactly one zero.    Finally, if equation $\chi(z,c_*)=0$ has at least
one root $z_0$ with $\Re z_0 > \lambda_1(c_*)$, then  by Hurwitz's theorem from the complex analysis, we conclude that $\chi(z,c)$ also has at least
one root $\tilde z_0$ with $\Re \tilde z_0 > \lambda_1(c)$ for all $c >c_*$ close to $c_*$, a contradiction. This completes the proof of  Lemma \ref{STA}. \hfill $\square$

\section*{Acknowledgments}  \noindent   This work was supported by FONDECYT (Chile) through the Postdoctoral Fondecyt 2016 program with project number 3160473 (A. Solar) and FONDECYT  
project 1150480 (S. Trofimchuk).  

\vspace{1cm}


\begin{thebibliography}{100} 


\bibitem{AGT} { M.~Aguerrea, C.~ Gomez, S.~Trofimchuk}, {On uniqueness of semi-wavefronts (Diekmann-Kaper theory of a nonlinear convolution equation re-visited),} Math. Ann.  354 (2012) 73--109.



\bibitem{BS}  R. Benguria, A. Solar,  An iterative estimation for disturbances of semi-wavefronts to the delayed Fisher-KPP equation, 
arXiv:1806.04255. 




\bibitem{BY}  M. Bani-Yaghoub,  G.-M. Yao, M. Fujiwara, D.E. Amundsen,  Understanding the interplay between density dependent birth function and maturation time delay using a reaction-diffusion population model,
{Ecological Complexity} { 21} (2015) 14--26. 



\bibitem{BNPR}  H. Berestycki, G. Nadin, B. Perthame and L. Ryzhik,  {The non-local Fisher-KPP equation: travelling
waves and steady states},  Nonlinearity {22}  (2009)
2813--2844.


\bibitem{CC} {J. Carr, A.  Chmaj,} Uniqueness of travelling waves for nonlocal monostable equations. Proc. Amer. Math. 
Soc. {132} (2004)   2433--2439. 

\bibitem{chen} X. Chen, J.-S. Guo,  Uniqueness and existence of traveling waves for discrete
quasilinear monostable dynamics. Math. Ann. 326  (2003), 123--146.

\bibitem{co}
J. Coville,  On uniqueness and monotonicity of solutions of non-local reaction
diffusion equation,    Ann. Mat. Pura Appl. {185} (2006) 461--485.  

\bibitem{CDM} J. Coville, J.D\'avila, S. Mart\'inez,  Nonlocal anisotropic dispersal with monostable nonlinearity, J. Differential Equations 244 (2008) 3080--3118. 

\bibitem{dk}{O. Diekmann,  H. Kaper,} {On the bounded solutions of a nonlinear convolution equation,} {Nonlinear Analysis TMA} {2} (1978)  721--737.


\bibitem{ADN} A. Ducrot,  G. Nadin, {Asymptotic behaviour of traveling waves for the delayed Fisher-KPP equation,} J. Differential Equations  256 (2014)  3115--3140. 

\bibitem{fzJDE}{J. Fang, X.-Q. Zhao, } {Existence and uniqueness of traveling waves for non-monotone integral equations with applications,} J. Differential Equations {248}  (2010) 2199--2226.  


\bibitem{FZ}{J. Fang, X.-Q. Zhao, }  {Monotone wavefronts of the nonlocal Fisher-KPP 
equation},  {Nonlinearity} {24} (2011) 3043--3054. 


\bibitem{gpt} {C. Gomez, H. Prado, S. Trofimchuk, } Separation dichotomy and wavefronts for a nonlinear convolution equation, {J. Math. Anal. Appl.} {420} (2014) 1--19.




\bibitem{GT} { A.~Gomez, S.~Trofimchuk,}    {Monotone traveling wavefronts of the KPP-Fisher delayed equation},  J. Differential Equations 250 (2011) 1767--1787.

\bibitem{LMS} A. Gomez, S. Trofimchuk, Global continuation of monotone wavefronts, J. London Math. Soc., 89, (2014) 47--68.

\bibitem{HVL} Hale JK and Verduyn Lunel SM 1993 {\it Introduction
to Functional Differential Equations}(New York:  Springer).

\bibitem{HK} K. Hasik, J. Kopfov\'a, P. N\'ab\v elkov\'a, S. Trofimchuk, 
Traveling waves in the nonlocal KPP-Fisher equation: different roles of the right and the left interactions, J. Differential Equations 260 (2016) 6130--6175.



\bibitem{HTa} K. Hasik, S. Trofimchuk, { Slowly oscillating wavefronts of the KPP-Fisher delayed equation}, Discrete Contin. Dynam. Systems 34 (2014) 3511--3533.

\bibitem{HTw} K. Hasik, S. Trofimchuk, An extension of Wright's 3/2-theorem for the KPP-Fisher delayed equation, Proceedings of the American Mathematical Society, 143 (2015) 3019-- 302.


\bibitem{KO} {M. K. Kwong and C. Ou,}  {Existence and nonexistence of monotone traveling waves for 
the delayed Fisher equation,} {J. Differential Equations} { 249} (2010) 728--745. 

 
\bibitem{mazou}
{ S. Ma, X. Zou,} 
{ Existence, uniqueness and stability of travelling
waves in a discrete reaction-diffusion monostable
equation with delay,} 
{J. Differential Equations} {217} (2005)
54--87.


\bibitem {MP} {J. Mallet-Paret,}  The Fredholm alternative for
functional differential equations of mixed type, { J. Dynam.
Diff. Eqns.} {11} (1999) 1--48. 

 
\bibitem{MS}  R. H. Martin, H. L. Smith, Abstract functional differential equations and reaction-diffusion
systems, Trans. Amer. Math. Soc. 321 (1990) 1--44.

\bibitem{MeiI}
 M. Mei,  Ch. Ou,  X.-Q. Zhao,
Global stability of monostable traveling waves for
nonlocal time-delayed reaction-diffusion equations,
{SIAM J. Math. Anal.}   {42}  (2010) 233--258.


\bibitem{NPTT} G. Nadin, L. Rossi, L. Ryzhik, B. Perthame, Wave-like solutions for nonlocal reaction-diffusion equations: a toy model, Math. Model. Nat. Phenom. 8  (2013) 33--41.



\bibitem{tz} {   H. R. Thieme, X.-Q. Zhao,}  Asymptotic speeds of spread and
traveling waves for integral equations and delayed
reaction-diffusion models, { J. Differential Equations} {
195} (2003),  430-- 470.

\bibitem{Sch} K. Schumacher,  Travelling-front solutions for integro-differential equations. I. J. Reine Angew. Math. 316 (1980) 54--70. 

\bibitem{TPT}  E. Trofimchuk, M. Pinto, S. Trofimchuk, Pushed traveling fronts in monostable  equations with  monotone delayed reaction,  {Discrete Contin. Dyn. Syst.}  {33} (2013) 
2169--2187.

\bibitem{TPTJDE}  E. Trofimchuk, M. Pinto, S. Trofimchuk, Monotone waves for non-monotone and non-local monostable reaction-diffusion equations, J. Differential Equations 261 (2016) 1203--1236.

\bibitem{VT} V. Volpert, S. Trofimchuk,  Global continuation of monotone waves for bistable delayed equations with unimodal nonlinearities, arXiv:1706.03403v2.  


\bibitem{WLR}  Z.-C. Wang, W.-T. Li,  S. Ruan,  Travelling fronts in monostable equations with nonlocal delayed effects, {J. Dynam. Diff. Eqns.} {20} (2008) 563--607. 

\bibitem{wz} {  J.~Wu, X.~Zou},  {Traveling wave fronts of
reaction-diffusion systems with delay,} {J. Dynam. Diff. Eqns.},  13
(2001),   651--687. 

\bibitem{XX} Z. Xu, D. Xiao,  Spreading speeds and uniqueness of traveling waves for a reaction diffusion equation with spatio-temporal delays, { {J. Differential Equations}}, 260 (2016),  268-303. 

\bibitem{yz} {T. Yi, X. Zou,} {Asymptotic behavior, spreading speeds, and traveling waves of nonmonotone dynamical systems,} {SIAM J. Math. Anal.} {47} (2015)  3005--3034.



\end{thebibliography}
\end{document}